\documentclass[11pt,leqno]{amsart}
\usepackage[a4paper,top=2.5 cm,bottom=2 cm,left=2.5 cm,right=2 cm]{geometry}
\usepackage{graphicx}
\usepackage{upref,ulem}
\usepackage{textcomp} 
\usepackage{tikz} 
\usetikzlibrary{arrows,snakes,backgrounds,calc}
\usetikzlibrary{hobby,decorations.markings}
\usepackage{amsmath ,amssymb}
\usepackage{color,bbm,epsfig,float,fancyhdr,soul}
\usepackage[mathscr]{eucal}
\usepackage{caption}
\usepackage[makeroom]{cancel}
\usepackage{bm}
\usepackage{multirow}
\usepackage{colortbl}
\captionsetup{ 
	justification=raggedright,
	labelfont=bf}

\linespread{1.3} 

\DeclareMathOperator{\sgn}{sgn}

\newtheorem{definition}{Definition}[section]
\newtheorem{theorem}{Theorem}[section]

\newtheorem*{maintheorem*}{Main Theorem}

\allowdisplaybreaks
\numberwithin{equation}{section}
\newtheorem{assumptions}{Assumptions}[section]

\renewcommand{\i}{\ifmmode\mathit{\mathchar"7010 }\else\char"10 \fi}
\renewcommand{\j}{\ifmmode\mathit{\mathchar"7011 }\else\char"11 \fi}
\newcommand{\R}{\mathbb{R}}

\newcommand{\Cc}[1]{\mathbf{C_c^{#1}}}

\newcommand{\dd}{\mathrm{d}}

\newcommand{\BV}{\mathbf{BV}}
\renewcommand{\L}[1]{\mathbf{L^#1}}

\newcommand{\C}[1]{\mathbf{C^{#1}}}
{%

\begin{enumerate}}%
{\end{enumerate}}

%
{%

\begin{enumerate}}%
{\end{enumerate}}

\title{Stability estimates for nonlocal balance laws arising in traffic modelling}
\author{Felisia Angela Chiarello } 
\address[Felisia Angela Chiarello]{\newline 
Dipartimento di Ingegneria e Scienze dell'Informazione e Matematica
\newline Via Vetoio, Ed. Coppito 1 67100 L'Aquila AQ, Italy}
\email[]{felisiaangela.chiarello@univaq.it}
\author{Harold Deivi Contreras}
\address[Harold Deivi Contreras]{\newline
GIMNAP-Departamento de Matem\'aticas, Universidad del B\'io-B\'io, Concepci\'on, Chile,\newline CI${}^2$MA-Universidad de Concepci\'on, Casilla 160-C, Concepci\'on, Chile.}
\email[]{harold.contreras1801@alumnos.ubiobio.cl}


\begin{document}

\begin{abstract}
This paper focuses on the proof of the stability of entropy weak solutions of a nonlocal balance law modelling vehicular traffic flow on a road with on- and off-ramps. The stability is obtained  with respect to a kernel function in the source term. We get an estimate of the $\L1-$dependence of the solution with respect to the initial datum, the on-ramp rate, the off-ramp rate and the mentioned kernel function. We also show numerical experiments in order to perform an optimization problem in traffic flow with on-ramps.

\end{abstract}

\date{\today}

\maketitle

\section{Introduction}
\subsection{Aim}
In traffic flow modeling, nonlocal conservation laws are intended to describe the behaviour of drivers that adapt their velocity with respect to what happens in front of them, see \cite{Blandin2016WellposednessOA, chiarello2020non, chiarello2019non, friedrich2021nonlocal, FKG2018};  the classical LWR (Lighthill-Whitham \cite{LW} and Richards \cite{R}) is not able to model this type of situation. For this reason, in order to generalise the LWR model in such a way to make it able to describe more realistic settings, in \cite{chiarello2022nonlocal} a nonlocal balance law intended to model vehicular traffic flow on a main road wit on- and off-ramps is introduced  and it is given by
\begin{equation}\label{nonRTM_2}
    \rho_t+(\rho v(\rho*\omega_{\eta}))_x=S_{\mathrm{on}}(\cdot, \cdot, \rho,\rho*\omega_{\eta,\delta})-S_{\mathrm{off}}(\cdot, \cdot, \rho), \quad x\in\R,
\end{equation}
where $S_{\mathrm{on}}$ and $S_{\mathrm{off}}$ represents the  traffic flow entering and exiting through an on- and off-ramp respectively, and  the convolution term in $S_{\mathrm{on}}$ is defined as follows
$$(\rho*\omega_{\eta,\delta})(t,x)=\int_{x-\eta+\delta}^{x+\eta+\delta}\rho(t,y)\omega_{\eta,\delta}(y-x)\dd y,$$ 
with $\eta\in[0,1]$ and $\delta\in[-\eta,\eta].$ Here, the parameter $\eta$ represents the length of the support of the kernel function $\omega_{\eta,\delta}$, while $\delta$ is the point at which the maximum of the kernel is attained. 
This choice of the kernel function models the fact that drivers on the on-ramp can see what happens on backward and forward on the main road.\\
It is well known that on-ramp merging has a great impact on traffic efficiency, if one concentrates on highway
networks the reduction of the capacity is often due to on- and off-ramps, for this reason in this paper we are interested to study the dependence of solutions to \eqref{nonRTM_2} on the convolution kernel given in the source term $S_{\mathrm{on}}$. The strategies that we employ are inspired by the results obtained in \cite{chiarello2022nonlocal, chiarello2019stability}. In particular, we adopt the results about existence and uniqueness to~\eqref{nonRTM_2} presented in~\cite{chiarello2022nonlocal} and we propose the analytical study of the dependence of solutions to~\eqref{nonRTM_2} on the kernel function in the source term.
\subsection{Related work}
In literature, the LWR model has been extended to include on and off-ramps by means of source and sink terms, see  \cite{delle2014pde, helbing2001master, liptak2021traffic,  liu1996modelling, tie2010effects, tie2009new}. Starting from the model introduced in \cite{liptak2021traffic}, in \cite{chiarello2022nonlocal} the authors introduce the new nonlocal balance law~\eqref{nonRTM_2}, in which the velocity function depends on a weighted mean of the downstream traffic density, in order to model the behaviour of drivers that adapt their velocity with respect to what happens in front of them; moreover the nonlocal source term is intended to model the fact that drivers on the on-ramp can see what happens on backward and forward on the main road. 
Regarding the aferomentioned model the existence and uniqueness of entropy weak solution has been proved, specifically,  approximating the problem using an Upwind-type numerical scheme and providing compactness estimates for the sequence of approximate solutions that allows to get the convergence and thus, the existence of solution. Furthemore, by using Kru\v{z}kov's doubling of variables technique the authors get an $\L{1}$ stability estimate depending on initial data, the on-ramp rate and the off-ramp rate. 
On the other hand, motivated by control and optimisation problems, in \cite{chiarello2019stability} the authors study a conservation law with nonlocal flux function given by
\begin{eqnarray}\label{eq:stability_Chiarello}
\partial_{t}\rho+\partial_{x}\left(f(t,x,\rho)v(\rho\ast\omega)(t,x)\right)=0.
\end{eqnarray}
The authors obtain an estimate of the dependence of the solution to~\eqref{eq:stability_Chiarello} with respect to the kernel function, the velocity and the initial datum. The stability is obtained from an entropy condition through the doubling of variables technique. 
\subsection{Outline of this paper.} This work is organized as follows: In Section \ref{sec:model} we recall the mathematical model on which we will focus our study as well as the definitions of weak and entropy weak solution. In section~\ref{sec:Main} we present the main results of this work, i.e. the $\L{1}-$ Lipschitz continuous dependence of the solutions to \eqref{nonRTM_2} on
to the initial datum, the on-ramp rate, the off-ramp rate and the kernel function in the source term. Finally, in Section~\ref{sec:Num_exp} we present numerical examples in order to illustrate the behaviour of solutions when some parameters of the kernel function in the source term are varying. 
\section{Mathematical model }\label{sec:model}
Let us consider the equation~\eqref{nonRTM_2} with terms $S_{\mathrm{on}}$ and $S_{\mathrm{off}}$ defined as     
\begin{eqnarray}
\label{Son_Hyp} S_{\mathrm{on}}(t, x, \rho,\rho*\omega_{\eta,\delta}) &=&\mathbf{1}_{\mathrm{on}}(x) q_{\mathrm{on}}(t) \left(1-\frac{\rho}{\rho_{\max}}\right)\left(1-\frac{\rho*\omega_{\eta,\delta}}{\rho_{\max}}\right),\\
\label{Soff_Hyp}S_{\mathrm{off}}(t, x, \rho) &=&\mathbf{1}_{\mathrm{off}}(x)q_{\mathrm{off}}(t)\frac{\rho}{\rho_{\max}},
\end{eqnarray}
with $\rho_{\max}=1$ for the sake of simplicity, and we also endow the nonlocal traffic reaction model~\eqref{nonRTM_2}-\eqref{Son_Hyp}-\eqref{Soff_Hyp} with an initial condition as follows  
\begin{eqnarray}\label{initial_condition}
 \rho(x,0)=\rho_0(x)\in \left(\L{1}\cap\BV\right)(\R;[0,{\rho_{\max}}]).
\end{eqnarray}
Let us assume the following assumptions on the parameters of model~\eqref{nonRTM_2}.\\
\begin{assumptions}\label{Ass}
We assume
\begin{itemize}
\item[($i$)] $q_{\mathrm{on}}^{\mathrm{ramp}}\in\L{\infty}(\R^{+};\R^{+}),q_{\mathrm{off}}^{\mathrm{ramp}}\in\L{\infty}(\R^{+};\R^{+}).$
\item[($ii$)] $v\in\C{2}([0, {\rho_{\max}}];\R^{+}), \ v'(\rho)\leq0,\, \rho\in[0,\rho_{\max}]$.
 \item[($iii$)] $\omega_{\eta}\in\C{1}([0,\eta];\R^{+})\ \textup{with} \  \omega'_{\eta}(x)\leq0, \ \int_{0}^{\eta}\omega_{\eta}(x)\dd x=1, \ \forall\eta>0$.
\item[($iv$)] $\omega_{\eta,\delta}\in(\C{1}\cap\L{1})([\delta-\eta,\delta+\eta];\R^{+}) \ \textup{with} \ \omega'(x)_{\eta,\delta}\geq0 \ \text{for} \ x\in[\delta-\eta,0]$,
$\omega'(x)_{\eta,\delta}\leq0 \ \text{ for } x\in[0,\delta+\eta]$, {and} $\int_{\delta-\eta}^{\delta+\eta}\omega_{\eta,\delta}(x)\dd x=1,\ \forall\eta>0.$
\end{itemize}
\end{assumptions}
We consider solutions in a weak sense as follows 
\begin{definition}\label{weak_Sol}
Let $\rho_{0}\in (\L{1}\cap\BV)(\R;[0, {\rho_{\max}}]).$ We say that $\rho\in\mathbf{C}([0,T]; \L{1}(\R;[0, {\rho_{\max}}])),$ with $\rho(t,\cdot)\in\BV(\R;[0, {\rho_{\max}}])$ for $t\in[0,T]$, is a weak solution to~\eqref{nonRTM_2}-\eqref{initial_condition} if for any $\varphi\in\Cc{1}([0,T[\times\R;\R) $
\begin{eqnarray*}
\int_{0}^{T}\int_{\R}\left(\rho\varphi_{t}+\rho V\varphi_{x}\right)\dd x\dd t+\int_{0}^{T}\int_{\Omega_{\mathrm{on}}}S_{\mathrm{on}}\varphi\dd x\dd t\\
-\int_{0}^{T}\int_{\Omega_{\mathrm{off}}}S_{\mathrm{off}}\varphi\dd x\dd t+\int_{\R}\rho_{0}(x)\varphi(0,x)\dd x=0,
\end{eqnarray*}
where $V(t,x)=v((\rho*\omega)(t,x))$. 
\end{definition}
Furthermore, we consider entropy weak solutions in  Kru\v zkov sense, i.e.
\begin{definition}\label{entropy_WS}
Let $\rho_{0}\in (\L{1}\cap\BV)(\R;[0, {\rho_{\max}}]).$ We say that $\rho\in\mathbf{C}([0,T]; \L{1}(\R;[0, {\rho_{\max}}])),$ with $\rho(t,\cdot)\in\BV(\R;[0, {\rho_{\max}}])$ for $t\in[0,T]$, is a entropy weak solution to \eqref{nonRTM_2} with initial datum $\rho_{0}$ if for any $\varphi\in\Cc{1}([0,T[\times\R;\R)$ and for all $k\in\R$
\begin{eqnarray*}
\int_{0}^{T}\int_{\R}\left(\vert\rho-k \vert\varphi_{t}+\left|\rho-k\right| V\varphi_{x}-\sgn(\rho-k)k V_{x }\varphi\right)\dd x\dd t\\
+\int_{0}^{T}\int_{\Omega_{\mathrm{on}}}\sgn(\rho-k)S_{\mathrm{on}}\varphi\dd x \dd t-\int_{0}^{T}\int_{\Omega_{\mathrm{off}}}\sgn(\rho-k)S_{\mathrm{off}}\varphi\dd x\dd t\\
+\int_{\R}\vert\rho_{0}-k \vert\varphi(0,x)\dd x\geq0,
\end{eqnarray*} 
\end{definition}
where $\Omega_{\mathrm{on}}$ and $\Omega_{\mathrm{off}}$ are the spatial positions of on-ramp and off-ramp on the main road, respectively. 

\section{Main Result}\label{sec:Main}
Before giving the main results of this work, we first recall the main Theorem in \cite[Theorem 2.1]{chiarello2022nonlocal}.  
\begin{theorem}
Let $\rho_{0}\in\left(\L{1}\cap\BV\right)\left(\R;[0,1]\right)$. Let the Assumptions \ref{Ass} hold. Then, for all $T>0$, the problem \eqref{nonRTM_2} has a unique solution $\rho\in\C{0}\left([0,T];\L{1}(\R;[0,1])\right)$ in the sense of Definition \ref{entropy_WS}. Moreover, the following estimates hold: for any $t\in[0,T]$
\begin{equation*}
\begin{array}{ll}
\left\|\rho(t)\right\|_{\L{1}(\R)}\leq \mathcal{R}_{1}(t),\\
0\leq\rho(t,x)\leq1,\\
TV(\rho(t))\leq e^{t\mathcal{H}}\left(TV(\rho_{0})+t\mathcal{Q}_T \right),
\end{array}
\end{equation*}
where 
\begin{equation}\label{R_1}\
\begin{split}
\mathcal{R}_{1}&=\left\|\rho_{0}\right\|_{\L{1}(\R)}+\left\|q_{\mathrm{on}}^{\mathrm{ramp}}(\cdot)\right\|_{\L{1}([0,t])}-\min_{x\in\Omega_{\mathrm{on}}}\left\|q_{\mathrm{on}}^{\mathrm{ramp}}(\cdot)\rho(\cdot,x)\right\|_{\L{1}([0,t])}\\
&-\min_{x\in\Omega_{\mathrm{off}}}\left\|q_{\mathrm{off}}^{\mathrm{ramp}}(\cdot)\rho(\cdot,x)\right\|_{\L{1}([0,t])}\\
\end{split}
\end{equation}
\begin{eqnarray}
\label{eq:QT}\mathcal{Q}_T&=&{2}\left(\left\|q_{\mathrm{on}}\right\|_{\L{\infty}([0,T])}+\left\|q_{\mathrm{off}}\right\|_{\L{\infty}([0,T])}\right) \\
\label{H}\mathcal{H}&=&2\left\|q_{\mathrm{on}}\right\|_{\L{\infty}([0,T])}+\left\|q_{\mathrm{off}}\right\|_{\L{\infty}([0,T])}+\omega_{\eta}(0)\mathcal{L}\\
\label{L}\mathcal{L}&=&\left(\|v\|_{\L{\infty}([0,1])}+\|v'\|_{\L{\infty}([0,1])}\right).
\end{eqnarray}
\end{theorem}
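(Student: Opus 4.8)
The plan is to follow the now-standard constructive strategy for nonlocal balance laws: I would build approximate solutions via a finite volume (upwind-type) scheme, derive uniform a priori estimates, extract a convergent subsequence by compactness, verify that the limit is an entropy weak solution, and finally obtain uniqueness together with the $\L{1}$ stability through Kru\v{z}kov's doubling of variables technique.

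Fix a grid $x_j=j\Delta x$, $t^n=n\Delta t$ and discrete densities $\rho^n_j$. Since $v\geq0$ and $v'\leq0$ by Assumption \ref{Ass}, the transport direction is fixed, so the natural monotone numerical flux is $F^n_{j+1/2}=\rho^n_j\,v((\rho*\omega_\eta)^n_{j+1/2})$, where $(\rho*\omega_\eta)^n_{j+1/2}$ is a quadrature of the convolution; the source and sink are discretized pointwise, keeping the factors $(1-\rho)$ and $\rho$ from \eqref{Son_Hyp}--\eqref{Soff_Hyp}. Under a CFL condition the scheme is monotone, and the algebraic structure of the source (the factor $1-\rho/\rho_{\max}$ in $S_{\mathrm{on}}$ and $\rho/\rho_{\max}$ in $S_{\mathrm{off}}$) immediately yields the maximum principle $0\leq\rho^n_j\leq\rho_{\max}$. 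Summing the scheme and tracking the mass balance between the on- and off-ramp contributions then produces the $\L{1}$ bound $\mathcal{R}_{1}$.

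The delicate estimate is on the total variation: differencing the scheme in $j$, summing, and invoking the maximum principle, I would obtain a discrete Gr\"onwall inequality of the form $TV(\rho^{n+1})\leq(1+\Delta t\,\mathcal{H})\,TV(\rho^n)+\Delta t\,\mathcal{Q}_T$ with constants exactly \eqref{H} and \eqref{eq:QT}; iterating and passing to the limit gives the stated exponential bound. A companion $\L{1}$-in-time Lipschitz estimate follows from the equation together with the BV bound. Equipped with uniform $\L{\infty}$, BV, and time-continuity bounds, Helly's theorem furnishes a subsequence converging in $\L{1}$ on every compact set to a limit $\rho$; the discrete Kru\v{z}kov entropy inequalities satisfied by the monotone scheme then pass to the limit, using strong convergence of the convolutions $\rho*\omega_\eta$ and $\rho*\omega_{\eta,\delta}$, showing that $\rho$ solves the problem in the sense of Definition \ref{entropy_WS}.

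Uniqueness and the $\L{1}$ Lipschitz dependence on the data follow by doubling of variables: comparing the entropy inequalities for two solutions and controlling the nonlocal flux and source mismatches through Lipschitz bounds on the differences of the convolutions. I expect the total-variation estimate to be the principal obstacle, precisely because of the nonlocality: the convolution couples distinct cells, so differencing the flux forces one to track how $\omega'_\eta$ and the boundary value $\omega_\eta(0)$ enter, and this is the origin of the term $\omega_\eta(0)\mathcal{L}$ in $\mathcal{H}$. A second difficulty arises in the doubling of variables, where Kru\v{z}kov's cancellation, designed for local fluxes, no longer closes exactly, so the mismatch of the nonlocal velocities at the two integration points must be absorbed into Gr\"onwall-type terms rather than cancelling outright.
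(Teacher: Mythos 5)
Your proposal follows essentially the same route as the proof of this theorem in \cite{chiarello2022nonlocal} (the present paper only recalls the statement, describing that proof as an approximation by an upwind-type numerical scheme with compactness estimates for the approximate solutions, followed by Kru\v{z}kov's doubling of variables for uniqueness and $\L{1}$ stability): monotone upwind discretization, maximum principle and $\L{1}$/BV bounds, Helly-type compactness, passage to the limit in the discrete entropy inequalities, and doubling of variables. The approach and the identified difficulties (the nonlocal coupling in the TV estimate producing the $\omega_{\eta}(0)\mathcal{L}$ term, and the non-cancellation of nonlocal velocities in the doubling argument) match the paper's strategy, so there is nothing to flag.
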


The following theorem is the main result of this work and it states the $\L{1}$- Lipschitz continuous dependence of solutions to \eqref{nonRTM_2} on to the initial datum, the on-ramp rate, the off-ramp rate and the kernel function. 

\begin{theorem}\label{Lips_Uniq}
Let $\rho$ and $\tilde{\rho}$ be two solutions to problem \eqref{nonRTM_2} in the sense of Definition \ref{entropy_WS}, with initial data $\rho_0,\ \tilde{\rho}_{0}\in\L{1}\cap\BV\left(\R;[0,1]\right)$, with on-ramp rates $q_{\mathrm{on}}$, $\tilde{q}_{\mathrm{on}}$, off-ramp rates  $q_{\mathrm{off}}$, $\tilde{q}_{\mathrm{off}}$ and kernel functions ${\omega}_{\eta,\delta},\,$ $\tilde{\omega}_{\eta,\delta}$, respectively. Assume $v\in\C{2}\left([0,1],\R^{+}\right)$. Then, for a.e. $t\in[0,T]$,
\begin{eqnarray*}
\left\|\rho(t)-\tilde{\rho}(t)\right\|_{\L{1}(\R)}&\leq&e^{\mathcal{C}T}\bigg(\left\|\rho_{0}-\tilde{\rho}_{0}\right\|_{\L{1}(\R)}+ {L}\left(\left\|q_{\mathrm{on}}-\tilde{q}_{\mathrm{on}}\right\|_{\L{1}([0,t])}+\left\|q_{\mathrm{off}}-\tilde{q}_{\mathrm{off}}\right\|_{\L{1}([0,T])}\right)\\
&&+r(T)\|\omega_{\eta,\delta}-\tilde{\omega}_{\eta,\delta}\|_{\L{1}(\R)}\bigg),
\end{eqnarray*}
where $r(T)$ depends on the $\L{1}-$norms of initial conditions, expected inflow flow of the on-ramp and expected output flow of the off-ramp and $\mathcal{C}$ is defined as in~\cite[(3.24)]{chiarello2022nonlocal}. 
\end{theorem}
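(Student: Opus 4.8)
The plan is to run Kru\v zkov's doubling of variables method, comparing the two entropy weak solutions through their entropy inequalities and absorbing every extra contribution coming from the distinct sources and kernels into a Gronwall inequality for the $\L1$ distance. First I would write the inequality of Definition \ref{entropy_WS} for $\rho(t,x)$ with constant $k=\tilde\rho(s,y)$ and, symmetrically, the one for $\tilde\rho(s,y)$ with $k=\rho(t,x)$, each tested against a nonnegative $\varphi(t,x,s,y)$ localising around the diagonal $\{s=t,\ y=x\}$. Adding the two inequalities and letting the doubling parameter vanish collapses the space-time variables onto the diagonal and yields, after the standard manipulations, a differential inequality of the form
\begin{equation*}
\frac{\dd}{\dd t}\left\|\rho(t)-\tilde\rho(t)\right\|_{\L1(\R)}\le (\text{flux terms})+(\text{source terms}).
\end{equation*}
Crucially, the two solutions share the same velocity law $v$ and the same flux kernel $\omega_\eta$, so the flux contribution is exactly the standard nonlocal one already treated in \cite{chiarello2022nonlocal}, and only the source part carries the kernel dependence.

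For the flux terms I would estimate the difference of the nonlocal speeds $V=v(\rho*\omega_\eta)$ and $\tilde V=v(\tilde\rho*\omega_\eta)$. Since $v\in\C2$ is Lipschitz and $\omega_\eta$ is bounded and nonincreasing with total mass one, one gets $\|V-\tilde V\|_{\L\infty}\le\|v'\|_{\L\infty}\,\omega_\eta(0)\,\|\rho-\tilde\rho\|_{\L1(\R)}$, while the terms carrying $V_x,\tilde V_x$ (arising from the $\sgn(\rho-k)\,kV_x\varphi$ term in the entropy inequality) are handled by transferring the derivative onto the kernel via integration by parts and invoking the uniform $TV$ and $\L\infty$ bounds supplied by the first Theorem. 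All of these are linear in $\|\rho-\tilde\rho\|_{\L1}$ and are absorbed into the constant $\mathcal C$ of \cite[(3.24)]{chiarello2022nonlocal}.

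The source terms are the genuinely new part. I would write $S_{\mathrm{on}}-\tilde S_{\mathrm{on}}$ and $S_{\mathrm{off}}-\tilde S_{\mathrm{off}}$ and split each into three pieces: (a) the explicit dependence on the rates $q_{\mathrm{on}}-\tilde q_{\mathrm{on}}$, $q_{\mathrm{off}}-\tilde q_{\mathrm{off}}$, which, using $0\le\rho,\tilde\rho\le1$ and the boundedness of the remaining factors, produces the terms $L\big(\|q_{\mathrm{on}}-\tilde q_{\mathrm{on}}\|_{\L1}+\|q_{\mathrm{off}}-\tilde q_{\mathrm{off}}\|_{\L1}\big)$; (b) the dependence on the pointwise densities $\rho-\tilde\rho$, again linear in $\|\rho-\tilde\rho\|_{\L1}$ and absorbed into $\mathcal C$; and (c) the convolution difference in $S_{\mathrm{on}}$, decomposed as
\begin{equation*}
\rho*\omega_{\eta,\delta}-\tilde\rho*\tilde\omega_{\eta,\delta}=(\rho-\tilde\rho)*\omega_{\eta,\delta}+\tilde\rho*(\omega_{\eta,\delta}-\tilde\omega_{\eta,\delta}).
\end{equation*}
The first summand is controlled in $\L1(\R)$ by $\|\rho-\tilde\rho\|_{\L1}$ and absorbed into $\mathcal C$; the second, by Young's convolution inequality, is controlled by $\|\tilde\rho\|_{\L1(\R)}\,\|\omega_{\eta,\delta}-\tilde\omega_{\eta,\delta}\|_{\L1(\R)}$, so that after multiplying by the bounded factor $\mathbf 1_{\mathrm{on}}q_{\mathrm{on}}$ and integrating in time it yields precisely $r(T)\,\|\omega_{\eta,\delta}-\tilde\omega_{\eta,\delta}\|_{\L1(\R)}$, with $r(T)$ gathering the $\L1$ bound $\mathcal R_1$ on $\tilde\rho$ and the rate norms furnished by the first Theorem.

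Collecting everything produces
\begin{equation*}
\frac{\dd}{\dd t}\left\|\rho(t)-\tilde\rho(t)\right\|_{\L1(\R)}\le\mathcal C\left\|\rho(t)-\tilde\rho(t)\right\|_{\L1(\R)}+L\big(\cdots\big)+r(T)\,\|\omega_{\eta,\delta}-\tilde\omega_{\eta,\delta}\|_{\L1(\R)},
\end{equation*}
and Gronwall's lemma gives the factor $e^{\mathcal C T}$ together with the stated estimate. The main obstacle I anticipate is the rigorous passage to the diagonal limit for the \emph{nonlocal} flux: because $V$ and $\tilde V$ are nonlocal functionals of the solutions, the $V_x$-terms do not cancel pointwise as in the local case and must be dominated uniformly in the doubling parameter through the $TV$ bounds and a careful commutator/integration-by-parts argument on $\omega_\eta$. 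By contrast, the kernel-difference contribution (c), though new, is comparatively direct once the decomposition above and Young's inequality are in place.
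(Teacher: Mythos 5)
Your proposal is correct and follows essentially the same route as the paper: Kru\v zkov doubling of variables (outsourced to \cite{chiarello2022nonlocal}) to obtain an $\L1$ Gronwall-type inequality, the same three-way splitting of the source difference into rate, density, and convolution contributions, and the same key decomposition $\rho*\omega_{\eta,\delta}-\tilde\rho*\tilde\omega_{\eta,\delta}=(\rho-\tilde\rho)*\omega_{\eta,\delta}+\tilde\rho*(\omega_{\eta,\delta}-\tilde\omega_{\eta,\delta})$ bounded via Young's inequality, with $\mathcal{R}_1$ feeding into $r(T)$. The only cosmetic difference is that you phrase the Gronwall step as a differential inequality while the paper works with the integrated form; the content is identical.
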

\begin{proof}
Since $\rho$ and $\tilde{\rho}$ are entropy weak solutions to~\eqref{nonRTM_2} then
\begin{eqnarray*}\label{problems}
\left\{ \begin{array}{lcc}
\rho_{t}+(\rho V(t,x))_{x}=S_{\mathrm{on}}\left(t,x,q_{\mathrm{on}},\rho,R_{\mathrm{on}}\right)-S_{\mathrm{off}}\left(t,x,q_{\mathrm{off}},\rho\right)\\
\rho(0,x)=\rho_{0}(x),
 \end{array}
\right.
   \end{eqnarray*}
   \begin{eqnarray*}
 \left\{ \begin{array}{lcc}
\tilde{\rho}_{t}+(\tilde{\rho} \tilde{V}(t,x))_{x}={S}_{\mathrm{on}}\left(t,x,\tilde{q}_{\mathrm{on}},\tilde{\rho},\tilde{R}_{\mathrm{on}}\right)-S_{\mathrm{off}}\left(t,x,\tilde{q}_{\mathrm{off}},\rho\right)\\
\tilde{\rho}(0,x)=\tilde{\rho}_{0}(x),
 \end{array}\right.
 \end{eqnarray*}
in a distributional sense, where\ $ V(t,x)=v((\rho\ast\omega_{\eta})(t,x)),$ $R_{\mathrm{on}}=(\rho\ast\omega_{\eta,\delta})(t,x)$, $ \tilde{V}(t,x)=v((\tilde{\rho}\ast\omega_{\eta})(t,x)),$ $\tilde{R}_{\mathrm{on}}=(\tilde{\rho}\ast\tilde{\omega}_{\eta,\delta})(t,x)$. Following the argument in~\cite{chiarello2022nonlocal} and using Kru\v{z}kov's doubling of variables technique we get 
\begin{eqnarray}\label{doubling_variables}
\left\|\rho(T,\cdot)-\tilde{\rho}(T,\cdot)\right\|_{\L{1}(\R)}&\leq&\left\|\rho_{0}-\tilde{\rho}_{0}\right\|_{\L{1}(\R)}+\int_{0}^{T}\int_{\Omega_{\mathrm{on}}}\left|\tilde{\mathcal{S}}_{\mathrm{on}}\right|\dd x\dd t+\int_{0}^{T}\int_{\Omega_{\mathrm{off}}}\left|\tilde{\mathcal{S}}_{\mathrm{off}}\right|\dd x\dd t\nonumber\\
&&+\int_{0}^{T}\int_{\R}\left|\mathcal{V}\right|\left|\partial_{x}\rho(t,x)\right|\dd x \dd t+\int_{0}^{T}\int_{\R}\left|\mathcal{V}_{x}\right|\left|\rho(t,x)\right|\dd x\dd t,
\end{eqnarray}
where 
\begin{eqnarray*}
\tilde{\mathcal{S}}_{\mathrm{on}}&=&S_{\mathrm{on}}\left(t,x,q_{\mathrm{on}},\rho,R_{\mathrm{on}}\right)-S_{\mathrm{on}}\left(t,x,\tilde{q}_{\mathrm{on}},\tilde{\rho},\tilde{R}_{\mathrm{on}}\right),\\
\tilde{\mathcal{S}}_{\mathrm{off}}&=&S_{\mathrm{off}}\left(t,x,q_{\mathrm{on}},\rho\right)-S_{\mathrm{off}}\left(t,x,\tilde{q}_{\mathrm{on}},\tilde{\rho}\right),\\
\mathcal{V}&=&v(R)-v(P),\\
\mathcal{V}_{x}&=&\partial_{x}v(R)-\partial_{x}v(P).
\end{eqnarray*}
Except for the second term, all terms appearing at the right-hand side of~\eqref{doubling_variables} are computed as in \cite[Theorem 3.1]{chiarello2022nonlocal},
\begin{eqnarray*}
\int_{0}^{T}\int_{\Omega_{\mathrm{off}}}\left|\mathcal{S}_{\mathrm{off}}\right|\dd x\dd t\nonumber
\label{Soff_bound}&\leq&{\left\|{q}_{\mathrm{off}}\right\|_{\L{\infty}([0,T])}}\int_{0}^{T}\left\|\rho(t,\cdot)-\tilde{\rho}(t,\cdot)\right\|_{\L{1}(\R)}\dd t+{L}\left\|q_{\mathrm{off}}-\tilde{q}_{\mathrm{off}}\right\|_{\L{1}([0,T])},
\end{eqnarray*}
\begin{eqnarray*}
\int_{0}^{T}\int_{\R}\left|\mathcal{V}\right|\left|\partial_{x}\rho(t,x)\right|\dd x \dd t\nonumber
&\leq&\label{diff_vel2}\omega_{\eta}(0)\left\|v'\right\|_{\L{\infty}([0,1])}\sup_{t\in[0,T]} TV(\rho(t,\cdot))\int_{0}^{T}\left\|\rho(t,\cdot)-\tilde{\rho}(t,\cdot)\right\|_{\L{1}(\R)}\dd t,
\end{eqnarray*}
\begin{eqnarray*}
\int_{0}^{T}\int_{\R}\left|\mathcal{V}_{x} \right|\left|\rho(t,x)\right|\dd x\dd t
\label{diff_Vx2}&\leq&\mathcal{W}\int_{0}^{T}\left\|\rho(t,\cdot)-\tilde{\rho}(t,\cdot)\right\|_{\L{1}(\R)}\dd t.
\end{eqnarray*}
Regarding the second term in~\eqref{doubling_variables}, we have  
\begin{eqnarray*}
\int_{0}^{T}\int_{\Omega_{\mathrm{on}}}\left|\tilde{\mathcal{S}}_{\mathrm{on}}\right|\dd x\dd t&=&\int_{0}^{T}\int_{\Omega_{\mathrm{on}}}\left|S_{\mathrm{on}}\left(t,x,q_{\mathrm{on}},\rho,R_{\mathrm{on}}\right)-S_{\mathrm{on}}\left(t,x,\tilde{q}_{\mathrm{on}},\tilde{\rho},\tilde{R}_{\mathrm{on}}\right)\right|\dd x\dd t\\
&\leq&\int_{0}^{T}\int_{\Omega_{\mathrm{on}}}\left(\left|\tilde{\mathcal{S}}_{\mathrm{on}}^{1}\right|+\left|\tilde{\mathcal{S}}_{\mathrm{on}}^{2}\right|+\left|\tilde{\mathcal{S}}_{\mathrm{on}}^{3}\right|\right)\dd x\dd t,
\end{eqnarray*}
where 
\begin{eqnarray*}
\tilde{\mathcal{S}}_{\mathrm{on}}^{1}&=&S_{\mathrm{on}}\left(t,x,q_{\mathrm{on}},\rho,R_{\mathrm{on}}\right)-S_{\mathrm{on}}\left(t,x,q_{\mathrm{on}},\rho,\tilde{R}_{\mathrm{on}}\right),\\
\tilde{\mathcal{S}}_{\mathrm{on}}^{2}&=&S_{\mathrm{on}}\left(t,x,q_{\mathrm{on}},\rho,\tilde{R}_{\mathrm{on}}\right)-S_{\mathrm{on}}\left(t,x,q_{\mathrm{on}},\tilde{\rho},\tilde{R}_{\mathrm{on}}\right),\\ \tilde{\mathcal{S}}_{\mathrm{on}}^{3}&=&S_{\mathrm{on}}\left(t,x,q_{\mathrm{on}},\tilde{\rho},\tilde{R}_{\mathrm{on}}\right)-S_{\mathrm{on}}\left(t,x,\tilde{q}_{\mathrm{on}},\tilde{\rho},\tilde{R}_{\mathrm{on}}\right).
\end{eqnarray*}
We bound $\tilde{\mathcal{S}}_{\mathrm{on}}^{2}$ and $\tilde{\mathcal{S}}_{\mathrm{on}}^{3}$ writing  
\begin{eqnarray*}
\int_{0}^{T}\int_{\Omega_{\mathrm{on}}}\left|\tilde{\mathcal{S}}_{\mathrm{on}}^{2}\right|\dd x\dd t
&\leq& {\left\|q_{\mathrm{on}}\right\|_{\L{\infty}([0,T])}}\int_{0}^{T}\left\|\rho(t,\cdot)-\tilde{\rho}(t,\cdot)\right\|_{\L{1}(\R)}\dd t,
\end{eqnarray*}
\begin{eqnarray*}
\int_{0}^{T}\int_{\Omega_{\mathrm{on}}}\left|\tilde{\mathcal{S}}_{\mathrm{on}}^{3}\right|\dd x\dd t&\leq&\int_{0}^{T}\int_{\Omega_{\mathrm{on}}}\left|q_{\mathrm{on}}-\tilde{q}_{\mathrm{on}}\right| \dd x\dd t\\
&\leq& {L}\left\|q_{\mathrm{on}}-\tilde{q}_{\mathrm{on}}\right\|_{\L{1}([0,T])}.
\end{eqnarray*}
Next, we are going to bound $\tilde{\mathcal{S}}_{\mathrm{on}}^{1}$ term,
\begin{eqnarray*}
\left|\tilde{\mathcal{S}}_{\mathrm{on}}^{1}\right|&=&\left|\mathbf{1}_{\mathrm{on}}q_{\mathrm{on}}\left(1-\rho\right)\left(\left(1-R_{\mathrm{on}}\right)-\left(1-\tilde{R}_{\mathrm{on}}\right)\right)\right|\\
&\leq& {\left\|q_{\mathrm{on}}\right\|_{\L{\infty}([0,T])}}\left|\tilde{R}_{\mathrm{on}}-R_{\mathrm{on}}\right|,
\end{eqnarray*}
thus
\begin{eqnarray*}
\int_{0}^{T}\int_{\Omega_{\mathrm{on}}}\left|\tilde{\mathcal{S}}_{\mathrm{on}}^{1}\right|\dd x\dd t&\leq& {\left\|q_{\mathrm{on}}\right\|_{\L{\infty}([0,T])}}\int_{0}^{T}\int_{\Omega_{\mathrm{on}}}\left|\tilde{R}_{\mathrm{on}}-R_{\mathrm{on}}\right|\dd x\dd t,
\end{eqnarray*}
and observe that 
\begin{eqnarray*}
\int_{\Omega_{\mathrm{on}}}\left|{R}_{\mathrm{on}}-\tilde{R}_{\mathrm{on}}\right|\dd x
&=&\int_{\Omega_{\mathrm{on}}}\left|\omega_{\eta,\delta}\ast\rho-\tilde{\omega}_{\eta,\delta}\ast\tilde{\rho}\right|\dd x\\
&=&\int_{\Omega_{\mathrm{on}}}\left|\omega_{\eta,\delta}\ast\rho-\omega_{\eta,\delta}\ast\tilde{\rho}+\omega_{\eta,\delta}\ast\tilde{\rho}-\tilde{\omega}_{\eta,\delta}\ast\tilde{\rho}\right|\dd x\\
&\leq&\int_{\Omega_{\mathrm{on}}}\left|\omega_{\eta,\delta}\ast\rho-\omega_{\eta,\delta}\ast\tilde{\rho}\right|+\left|\omega_{\eta,\delta}\ast\tilde{\rho}-\tilde{\omega}_{\eta,\delta}\ast\tilde{\rho}\right|\dd x\\
&=&\int_{\Omega_{\mathrm{on}}}\left|\omega_{\eta,\delta}\ast\left(\rho-\tilde{\rho}\right)\right|+\int_{\Omega_{\mathrm{on}}}\left|\left(\omega_{\eta,\delta}-\tilde{\omega}_{\eta,\delta}\right)\ast\tilde{\rho}\right|\dd x\\
&\leq&\left\|\omega_{\eta,\delta}\right\|_{\L{1}(\Omega_{\mathrm{on}})}\left\|\rho-\tilde{\rho}\right\|_{\L{1}(\Omega_{\mathrm{on}})}+\left\|\omega_{\eta,\delta}-\tilde{\omega}_{\eta,\delta}\right\|_{\L{1}(\Omega_{\mathrm{on}})}\left\|\tilde{\rho}\right\|_{\L{1}(\Omega_{\mathrm{on}})}\\
&\leq&\left\|\omega_{\eta,\delta}\right\|_{\L{1}(\R)}\left\|\rho-\tilde{\rho}\right\|_{\L{1}(\R)}+\left\|\omega_{\eta,\delta}-\tilde{\omega}_{\eta,\delta}\right\|_{\L{1}(\R)}\left\|\tilde{\rho}\right\|_{\L{1}(\R)}\\
&\leq&\left\|\rho-\tilde{\rho}\right\|_{\L{1}(\R)}+\left\|\omega_{\eta,\delta}-\tilde{\omega}_{\eta,\delta}\right\|_{\L{1}(\R)}\mathcal{R}_{1}(t),
\end{eqnarray*}
since $\int_{\R}\omega_{\eta,\delta}(x)\dd x=1.$ Here $\mathcal{R}_{1}$ is defined as in~\eqref{R_1}. Then,
\begin{eqnarray*}
\int_{0}^{T}\int_{\Omega_{\mathrm{on}}}\left|\tilde{\mathcal{S}}_{\mathrm{on}}^{1}\right|\dd x\dd t
&\leq& {\left\|q_{\mathrm{on}}\right\|_{\L{\infty}([0,T])}}\int_{0}^{T}\left\|\rho(t,\cdot)-\tilde{\rho}(t,\cdot)\right\|_{\L{1}(\R)}\dd t\\
&&+\int_{0}^{T}\left\|\omega_{\eta,\delta}-\tilde{\omega}_{\eta,\delta}\right\|_{\L{1}(\R)}\mathcal{R}_{1}(t)\dd t\\
&=& \left\|q_{\mathrm{on}}\right\|_{\L{\infty}([0,T])}\int_{0}^{T}\left\|\rho(t,\cdot)-\tilde{\rho}(t,\cdot)\right\|_{\L{1}(\R)}\dd t\\
&&+\left\|\omega_{\eta,\delta}-\tilde{\omega}_{\eta,\delta}\right\|_{\L{1}(\R)}\int_{0}^{T}\mathcal{R}_{1}(t)\dd t.
\end{eqnarray*}
Therefore, the inequality~\eqref{doubling_variables} is equivalent to following inequality
\begin{eqnarray*}
\left\|\rho(T,\cdot)-\tilde{\rho}(T,\cdot)\right\|_{\L{1}(\R)}
&\leq&\left\|\rho_{0}-\tilde{\rho}_{0}\right\|_{\L{1}(\R)}+{L}\left(\left\|q_{\mathrm{on}}-\tilde{q}_{\mathrm{on}}\right\|_{\L{1}([0,t])}+\left\|q_{\mathrm{off}}-\tilde{q}_{\mathrm{off}}\right\|_{\L{1}([0,t])}\right)\nonumber\\
&&+\mathcal{C}\int_{0}^{T}\left\|\rho(t,\cdot)-\tilde{\rho}(t,\cdot)\right\|_{\L{1}(\R)}\dd t+\left\|\omega_{\eta,\delta}-\tilde{\omega}_{\eta,\delta}\right\|_{\L{1}(\R)}\int_{0}^{T}\mathcal{R}_{1}(t)\dd t\\
&=&\left\|\rho_{0}-\tilde{\rho}_{0}\right\|_{\L{1}(\R)}+{L}\left(\left\|q_{\mathrm{on}}-\tilde{q}_{\mathrm{on}}\right\|_{\L{1}([0,t])}+\left\|q_{\mathrm{off}}-\tilde{q}_{\mathrm{off}}\right\|_{\L{1}([0,t])}\right)\nonumber\\
&&+\mathcal{C}\int_{0}^{T}\left\|\rho(t,\cdot)-\tilde{\rho}(t,\cdot)\right\|_{\L{1}(\R)}\dd t+r(T)\left\|\omega_{\eta,\delta}-\tilde{\omega}_{\eta,\delta}\right\|_{\L{1}(\R)}.
\end{eqnarray*}
An application of Gronwall's Lemma to the above inequality completes the proof.
\end{proof}

\section{Numerical examples}\label{sec:Num_exp}
In this Section we use \cite[Algorithm 3.1]{chiarello2022nonlocal} in order to compute approximate solutions to \eqref{nonRTM_2}-\eqref{initial_condition} with the source term \eqref{Son_Hyp}. We simulate an optimization problem in traffic merging, which consists in finding the optimal value $\delta$ keeping fixed $\eta$ in the kernel function $\omega_{\eta,\delta}.$  According to the meaning of that kernel function, this is equivalent to find where a drivers located on the on-ramp should look in order to avoid the congestion on the main road. To this end, we consider one on-ramp with length $L=0.1$ located from $x=1.0$ until $x=1.1$, and we  consider the following kernel functions 
\begin{eqnarray*}
\omega_{\eta}(x)&:=&2\frac{\eta-x}{\eta^2}\chi_{[0,\eta]}(x),\\
\omega_{\eta,\delta}(x)&:=& \frac{1}{\eta^6}\frac{16}{5\pi}\left(\eta^2-(x-\delta)^2 \right)^{5/2}\chi_{[-\eta+\delta,\eta+\delta]}(x),
\end{eqnarray*}
for convective and reactive terms, respectively, with $\eta=0.5$ and $\delta\in[-\eta,\eta]$, for $x\in[-1,4]$ at simulated time $T=6$ and velocity function given by $v(\rho)=1-\rho$. We also consider $\Delta x=1/200$, a constant initial condition $\rho_{0}(x)=0.3$, and the rate of the on-ramp given by $q_{\mathrm{on}}(t)=1.2$. Following \cite{chiarello2019stability}, as a metric of traffic congestion we consider the two following functionals
\begin{eqnarray*}
    J(T)=\int_{0}^{T}\mathrm{d}|\partial_{x}\rho(t,\cdot)|\mathrm{d}t, 
\end{eqnarray*}
\begin{eqnarray*}
\Psi(T;a,b)=\int_{0}^{T}\int_{a}^{b}\varphi(\rho(t,x))\mathrm{d}x\mathrm{d}t,
\end{eqnarray*}
where
\begin{eqnarray*}
\varphi(r)= \left\{ \begin{array}{lcc}
             0, &    r<0.75, \\
             10r-7.5,& 0.75\leq r\leq0.85, \\
             1, & 0.85<r\leq1.
             \end{array}
   \right.
   \end{eqnarray*}
The functional J defined measures the integral with respect to time of the spatial total variation of the traffic density while
the functional $\Psi$ measures the traffic congestion in the interval $[a,b]=[-1,4]$.
Figure~\ref{fig:psi_vs_delta} shows the values of the functionals $J$ and $\Psi$ when we vary the value of $\delta,$ keeping fixed $\eta$. We can observe that the minimum value of $\Psi$ is given when $\delta=0.1$. In Figure~\ref{fig:x_vs_rho} we compare the solutions of~\eqref{nonRTM_2}-\eqref{initial_condition} for different values of $\delta$, keeping fixed $\eta.$ Namely, we consider $\delta\in\{-0.5,0.1,0.5\}$ and $\eta=0.5$. We can see that the solution for $\delta=0.1$ 
produces a smaller queue and therefore less increase of the density on the main road when vehicles enter through the on-ramp than the other values considered for $\delta$. 
\begin{figure}[htbp]
     \centering
     \includegraphics[scale=0.5]{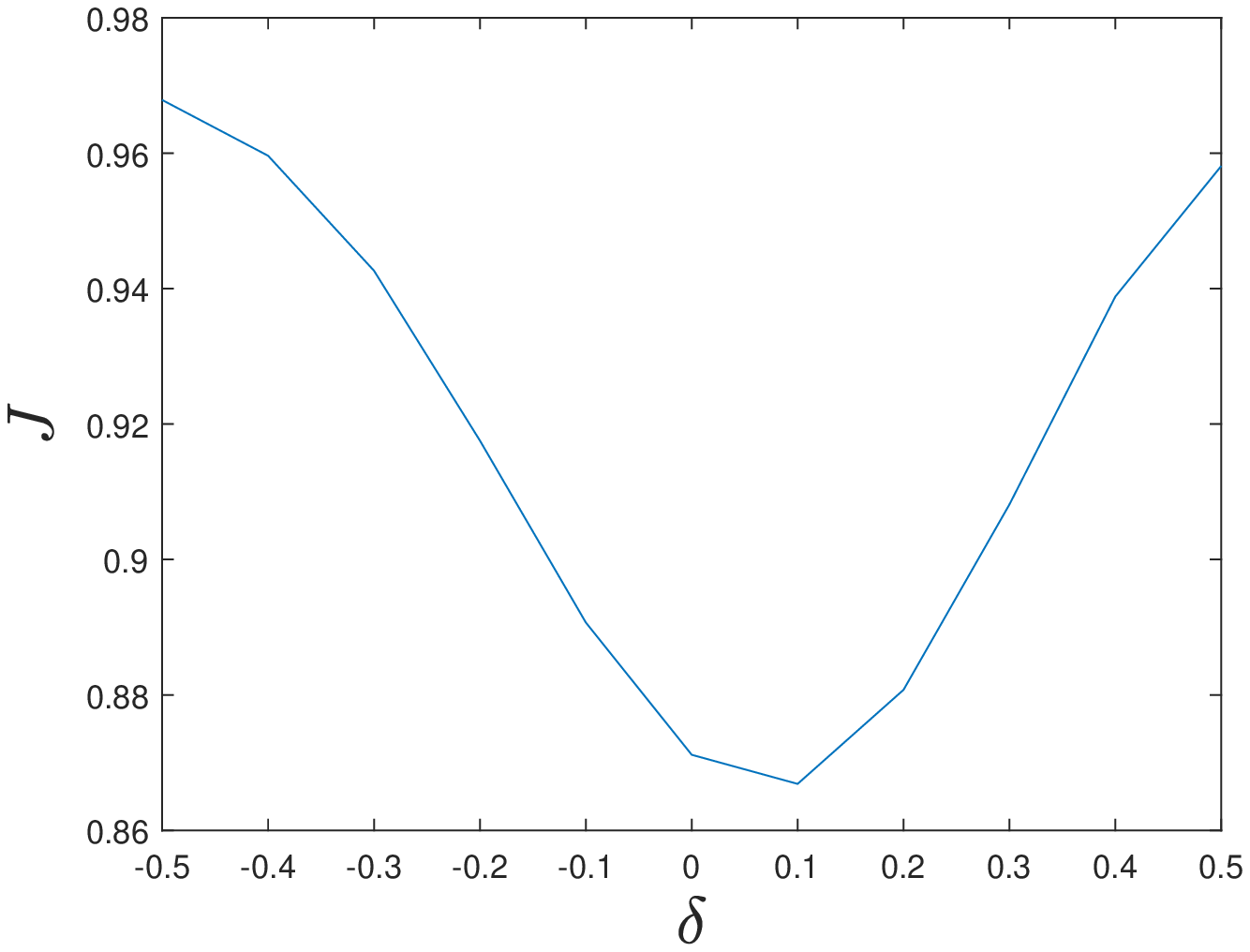}\includegraphics[scale=0.5]{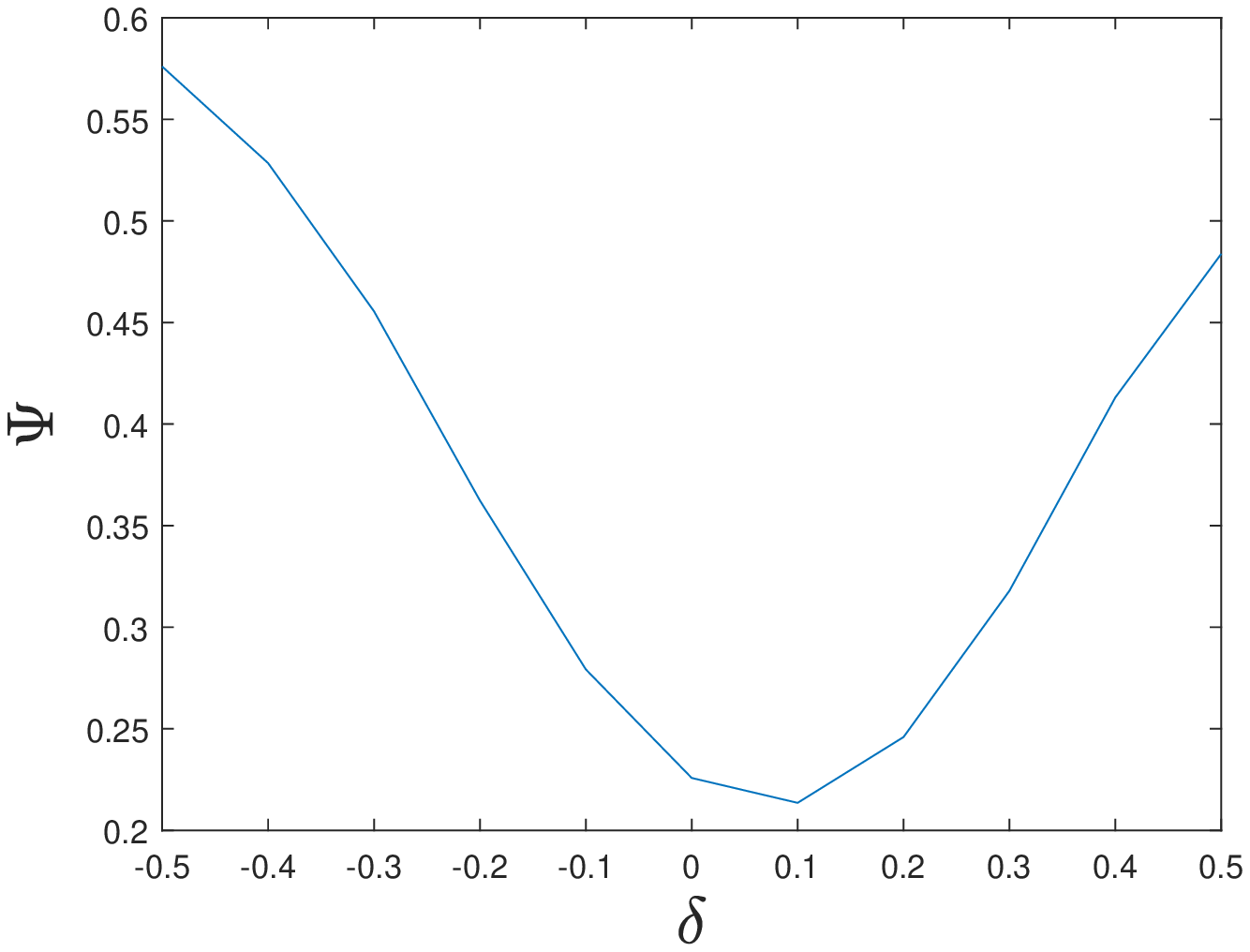}  
     \caption{Left: Functional  $J$ with $\eta=0.5$ and $\delta\in[-\eta,\eta]$.  Right: Functional $\Psi$ with $\eta=0.5$ and $\delta\in[-\eta,\eta]$}
     \label{fig:psi_vs_delta}
 \end{figure}

\begin{figure}[htbp]
     \centering
     \includegraphics[scale=0.5]{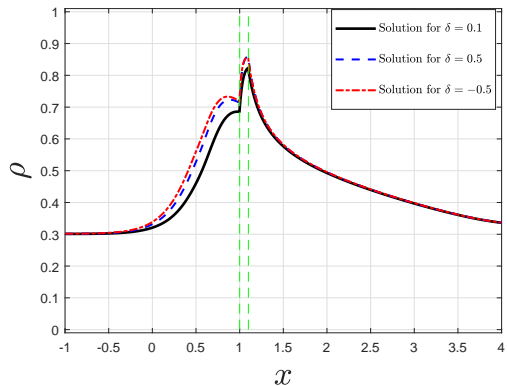}
     \caption{Solution of \eqref{nonRTM_2}-\eqref{initial_condition} for $\eta=0.5$ and varying $\delta\in\{-0.5,0.1,0.5\}$}
     \label{fig:x_vs_rho}
 \end{figure}

\section{Conclusions}
In this work we proved the $\L1-$stability of entropy weak solutions of a scalar nonlocal balance law with nonlocal source term arising in traffic modelling with on- and off-ramps introduced in~\cite{chiarello2022nonlocal}. We reached an estimate of the dependence of the solution with respect to the kernel function in the source term, the on-ramp rate, the off-ramp rate and the initial datum. The stability has been obtained from the entropy condition through the doubling of variables technique. Finally, following~\cite{chiarello2019stability}, we have shown numerical simulations illustrating the dependencies above for two cost functionals derived from traffic flow applications. 
\section*{Acknowledgments}
FAC is member of the Gruppo Nazionale per l'Analisi Matematica, la Probabilit\`a e le loro Applicazioni (GNAMPA) of the Istituto Nazionale di Alta Matematica (INdAM).  HDC are supported by the INRIA Associated Team ``Efficient numerical schemes for non-local transport phenomena'' (NOLOCO; 2018--2020). HDC was partially supported by the National Agency for Research and Development, ANID-Chile through Scholarship Program, Doctorado Becas Chile 2021, 21210826 and by Anillo project ACT210030. The authors would like to thank Luis Miguel Villada for helpful discussions about this work. 
{
\bibliographystyle{siam}

}

\end{document}